\newtheorem{theorem}{Theorem}[section]
\newtheorem{remark}{Remark}[section]
\begin{document}
\title{A Small Note About Lower Bound of Eigenvalues\footnote{This work is supported in part
by the National Natural Science Foundation of China (NSFC 91330202, 11371026,
11001259, 11031006, 2011CB309703),  the National 
Center for Mathematics and Interdisciplinary Science, CAS and the
President Foundation of AMSS-CAS.}}
\author{Hehu Xie\footnote{LSEC, NCMIS, Institute
of Computational Mathematics, Academy of Mathematics and Systems
Science, Chinese Academy of Sciences, Beijing 100190,
China(hhxie@lsec.cc.ac.cn)}\ \ \ and \ \
Chunguang You\footnote{LSEC, ICMSEC, Academy of Mathematics and Systems Science, Chinese Academy of
Sciences, Beijing 100190, China (youchg@lsec.cc.ac.cn)}
}
\date{}
\maketitle
\begin{abstract}
This paper gives a way to produce the lower bound of eigenvalues defined in a Hilbert space 
by the eigenvalues defined in another Hilbert space. The method is based on using the 
max-min principle for the eigenvalue problems.

\vskip0.3cm {\bf Keywords.} Eigenvalue problem, max-min principle, Hilbert space

\vskip0.2cm {\bf AMS subject classifications.} 65N30, 65N25, 65L15, 65B99.
\end{abstract}
\section{Introduction}
Recently, there are more and more research about the lower bound of eigenvalues of some 
type of partial differential operators (see all the reference in this paper). 
In this paper, we give a framework to produce 
the lower bound of the eigenvalues defined in some spaces.  We would like to say that 
the results in this not is obtained in our seminar and the motivation is to give a type 
of framework for the results in \cite{Liu}. 

\section{Abstract framework}
The assumptions for function spaces are listed below
\begin{itemize}
\item[(A1)]
Let $\mathbb{X}$ and $\mathbb{Y}$ be two Hilbert spaces  with inner product and norm
$(\cdot,\cdot)_\mathbb{X}$, $\|\cdot\|_\mathbb{X}$ and
$(\cdot,\cdot)_\mathbb{Y}$, $\|\cdot\|_\mathbb{Y}$ respectively.
And their exits a continuous and compact embedding operator $\gamma :\mathbb{X}\mapsto\mathbb{Y}$.

\item[(A2)]
Bilinear form $M(u,v)$ is symmetric, continuous and coercive over the space $\mathbb{X}\times\mathbb{X}$;
bilinear form $N(p,q)$ is symmetric, continuous and semi-positive definite
over the space $\mathbb{Y}\times\mathbb{Y}$.
\end{itemize}

\begin{remark}
$M(\cdot,\cdot)$ is an inner product of $\mathbb{X}$ with corresponding norm
$\|\cdot\|_M:=\sqrt{M(\cdot,\cdot)}$, and
$N(\cdot,\cdot)$ is an inner product of $\mathbb{Y}\backslash\ker (N)$ with corresponding norm
$\|\cdot\|_N:=\sqrt{N(\cdot,\cdot)}$.
\end{remark}

In the rest of this paper, for any $x\in\mathbb{X}$, we just use $x$ rather than $\gamma x$ to 
present the corresponding element in $\mathbb{Y}$.

Consider the abstract eigenvalue problem: Find $(\lambda,u)\in \mathbb{R}\times \mathbb{X}$,
such that $N(u,u) = 1$ and
\begin{equation}\label{abstract problem}
M(u,v)=\lambda N(u,v), \,\,\forall\, v\in \mathbb{X}.
\end{equation}
From the compactness (see, e.g. Section 8 of Babuska {Babuska-Osborn-1991}),
(\ref{abstract problem}) has the eigenpairs $\{(\lambda_k,u_k)\}$ ($k=1,2,\cdots$)
with $0<\lambda_1\leq\lambda_2\leq\cdots$ and $N(u_i, u_j) = \delta_{ij}$ ($\delta_{ij}:$ Kronecker's delta).

Let $\mathbb{W}$ and $\mathbb{V}$ be two subspaces of $\mathbb{X}$. Then we could define the eigenvalue problems on
$\mathbb{W}$ and $\mathbb{V}$, respectively.

Find $(\lambda^\mathbb{W},u^\mathbb{W})\in \mathbb{R}\times \mathbb{W}$,
such that $N(u^\mathbb{W},u^\mathbb{W}) = 1$ and
\begin{equation}\label{eigen W}
M(u^\mathbb{W},v^\mathbb{W})=\lambda^\mathbb{W} N(u^\mathbb{W},v^\mathbb{W}), \,\,\forall\, v^\mathbb{W}\in \mathbb{W}.
\end{equation}
Let $\{(\lambda_k^\mathbb{W},u_k^\mathbb{W})\}$ ($k=1,2,\cdots$) be the eigenpairs of (\ref{eigen W})
with $0<\lambda_1^\mathbb{W}\leq\lambda_2^\mathbb{W}\leq\cdots$ and $N(u_i^\mathbb{W}, u_j^\mathbb{W}) = \delta_{ij}$.

Find $(\lambda^\mathbb{V},u^\mathbb{V})\in \mathbb{R}\times \mathbb{V}$,
such that $N(u^\mathbb{V},u^\mathbb{V}) = 1$ and
\begin{equation}\label{eigen V}
M(u^\mathbb{V},v^\mathbb{V})=\lambda^\mathbb{V} N(u^\mathbb{V},v^\mathbb{V}), \,\,\forall\, v^\mathbb{V}\in \mathbb{V}.
\end{equation}
Let $\{(\lambda_k^\mathbb{V},u_k^\mathbb{V})\}$ ($k=1,2,\cdots$) be the eigenpairs of (\ref{eigen V})
with $0<\lambda_1^\mathbb{V}\leq\lambda_2^\mathbb{V}\leq\cdots$ and $N(u_i^\mathbb{V}, u_j^\mathbb{V}) = \delta_{ij}$.

Define
$\ker_{\mathbb{X}}(N):=
\{x\in\mathbb{X}\,|\,
N(x,x)=0\}$,
$\ker_{\mathbb{W}}(N):=
\{w\in\mathbb{W}\,|\,N(w,w)=0\}$ and
$\ker_{\mathbb{V}}(N):=
\{v\in\mathbb{V}\,|\,N(v,v)=0\}$.
Denote $R(\cdot)$ by the Rayleigh quotient over $\mathbb{X}$: for any $x \in\mathbb{X}\backslash\ker_{\mathbb{X}}(N)$,
\begin{equation}
R(x):=\frac{M(x,x)}{N(x,x)}.
\end{equation}
Thus the stationary values and stationary points of $R(\cdot)$ over $\mathbb{W}$ and $\mathbb{V}$ correspond to the eigenpairs of eigenvalue problem (\ref{eigen W}) and (\ref{eigen V}), respectively.
And the min-max principle holds for both $\lambda_k^\mathbb{W}$ and $\lambda_{k}^\mathbb{V}$:
\begin{equation}
\lambda_k^\mathbb{W}=\min_{S_k^\mathbb{W}\subset \mathbb W}\max_{w\in S_k^\mathbb{W}}R(w),\,\,\ \ \ \ \ 
\lambda_k^\mathbb{V}=\min_{S_k^\mathbb{V}}\max_{v\in S_k^\mathbb{V}}R(v),
\end{equation}
where $S_k^\mathbb{W}$ and $S_k^\mathbb{V}$ are any $k$-dimensional subspaces of $\mathbb{W}\backslash\ker_{\mathbb{W}}(N)$ and $\mathbb{V}\backslash\ker_{\mathbb{V}}(N)$, respectively.

Let $P: \mathbb{X}\mapsto\mathbb{V}$ be the projection operator with respect to $M(\cdot,\cdot)$:
\begin{equation}
M(x-P x, v) = 0, \,\,\forall\, x\in\mathbb{X},\,\forall\, v\in\mathbb{V}.
\end{equation}
Then we have the following  theorem which is the main result in this note. 

\begin{theorem}\label{framework theorem}
Suppose there exist a constant number $\alpha$ such that the following inequality holds
\begin{equation}\label{framework projection estimation}
\|x-P x\|_N \leq \alpha \|x-P x\|_M,\,\,\forall\, x\in \mathbb{X}.
\end{equation}
Let $\lambda_k^\mathbb{W}$ and $\lambda_k^\mathbb{V}$ be the ones defined in (\ref{eigen W})
and (\ref{eigen V}). Then, we have
\begin{equation}\label{framework lower bounds}
\frac{\lambda_k^\mathbb{V}}{1+\alpha^2\lambda_k^\mathbb{V}}\leq\lambda_k^\mathbb{W} \,\,\ \ (k=1,2,\cdots).
\end{equation}
\end{theorem}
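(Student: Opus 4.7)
The natural route is the min--max characterization: to show $\lambda_k^\mathbb{W}\ge\frac{\lambda_k^\mathbb{V}}{1+\alpha^2\lambda_k^\mathbb{V}}$ it suffices, by the definition of $\lambda_k^\mathbb{W}$, to exhibit for every $k$-dimensional subspace $S_k^\mathbb{W}\subset\mathbb{W}\setminus\ker_{\mathbb{W}}(N)$ some element $w^{\ast}\in S_k^\mathbb{W}$ with $R(w^{\ast})\ge\frac{\lambda_k^\mathbb{V}}{1+\alpha^2\lambda_k^\mathbb{V}}$. Given $S_k^\mathbb{W}$, the plan is to push it through the $M$-projection $P$ into $\mathbb{V}$ and use the min--max on $\mathbb{V}$ to locate $w^{\ast}$ so that the \emph{projected} Rayleigh quotient $R(Pw^{\ast})$ is at least $\lambda_k^\mathbb{V}$; then transfer the bound from $Pw^{\ast}$ back to $w^{\ast}$ using (\ref{framework projection estimation}) together with $M$-orthogonality.

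More precisely, first I would try to argue that $P$ restricted to $S_k^\mathbb{W}$ is injective and $PS_k^\mathbb{W}\cap\ker_{\mathbb{V}}(N)=\{0\}$. When this holds, $PS_k^\mathbb{W}$ is a $k$-dimensional subspace of $\mathbb{V}\setminus\ker_{\mathbb{V}}(N)$, so by the min--max on $\mathbb{V}$ there exists $w^{\ast}\in S_k^\mathbb{W}$ with $R(Pw^{\ast})\ge\lambda_k^\mathbb{V}$. Writing $v=Pw^{\ast}$ and using the defining $M$-orthogonality $M(w^{\ast}-v,v)=0$, I get the Pythagorean identity
\begin{equation*}
M(w^{\ast},w^{\ast})=M(v,v)+M(w^{\ast}-v,w^{\ast}-v),
\end{equation*}
whereas the triangle inequality for the $N$-seminorm combined with (\ref{framework projection estimation}) yields
\begin{equation*}
\sqrt{N(w^{\ast},w^{\ast})}\le\sqrt{N(v,v)}+\alpha\sqrt{M(w^{\ast}-v,w^{\ast}-v)}.
\end{equation*}
With the shorthand $A=N(v,v)$, $B=M(w^{\ast}-v,w^{\ast}-v)$, $C=M(v,v)\ge\lambda_k^\mathbb{V}A$ and $\mu=\lambda_k^\mathbb{V}$, the target inequality $M(w^{\ast},w^{\ast})(1+\alpha^2\mu)\ge\mu\,N(w^{\ast},w^{\ast})$ reduces to $(C+B)(1+\alpha^2\mu)\ge\mu(\sqrt{A}+\alpha\sqrt{B})^2$, which after expansion collapses to the manifestly non-negative square $(\alpha\mu\sqrt{A}-\sqrt{B})^2\ge 0$. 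That is the only real computation.

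The remaining concern is the degenerate alternative: what if $P$ drops rank on $S_k^\mathbb{W}$, or $PS_k^\mathbb{W}$ meets $\ker_{\mathbb{V}}(N)$ nontrivially? In either case there is a nonzero $w^{\ast}\in S_k^\mathbb{W}$ with $N(Pw^{\ast},Pw^{\ast})=0$ (taking $Pw^{\ast}=0$ in the rank-deficient case). A short Cauchy--Schwarz argument in the $N$-seminorm then gives $N(w^{\ast},w^{\ast})=N(w^{\ast}-Pw^{\ast},w^{\ast}-Pw^{\ast})\le\alpha^2 M(w^{\ast}-Pw^{\ast},w^{\ast}-Pw^{\ast})\le\alpha^2 M(w^{\ast},w^{\ast})$, whence $R(w^{\ast})\ge1/\alpha^2\ge\frac{\lambda_k^\mathbb{V}}{1+\alpha^2\lambda_k^\mathbb{V}}$. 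Thus both cases deliver the required $w^{\ast}$ and the theorem follows. I expect the main obstacle to be precisely this bookkeeping over the degenerate subcases; the algebraic inequality itself is routine once the right decomposition is in place.
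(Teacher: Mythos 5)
Your proposal is correct, but it takes a genuinely different route from the paper's. You work directly with the min--max characterization of $\lambda_k^\mathbb{W}$: for an arbitrary admissible $k$-dimensional subspace $S_k^\mathbb{W}\subset\mathbb{W}$ you push it forward by $P$ into $\mathbb{V}$, apply the min--max principle on $\mathbb{V}$ to find $w^{\ast}\in S_k^\mathbb{W}$ with $R(Pw^{\ast})\ge\lambda_k^\mathbb{V}$, and transfer the bound back to $w^{\ast}$ via the $M$-Pythagoras identity, the triangle inequality in $\|\cdot\|_N$ and (\ref{framework projection estimation}); your completing-the-square verification of $(C+B)(1+\alpha^2\mu)\ge\mu(\sqrt{A}+\alpha\sqrt{B})^2$ checks out, and the degenerate cases ($P$ losing rank on $S_k^\mathbb{W}$, or $PS_k^\mathbb{W}$ meeting $\ker_\mathbb{V}(N)$) are correctly settled by $R(w^{\ast})\ge 1/\alpha^2\ge\lambda_k^\mathbb{V}/(1+\alpha^2\lambda_k^\mathbb{V})$ (for $\alpha=0$ these cases are vacuous, as your own Cauchy--Schwarz computation shows). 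The paper argues differently: it first inserts the eigenvalues $\lambda_k$ of problem (\ref{abstract problem}) posed on all of $\mathbb{X}$, using $\lambda_k^\mathbb{W}\ge\lambda_k$ from the min--max principle since $\mathbb{W}\subset\mathbb{X}$, and then bounds $\lambda_k$ from below by the max--min principle in (\ref{min-max max-min}) with the special trial space $\mathbb{V}_{k-1}=\mathrm{span}\{u_1^\mathbb{V},\dots,u_{k-1}^\mathbb{V}\}$, estimating $R(v)$ for $v\in\mathbb{V}_{k-1}^{\mathbb{X}\perp}$ through the same decomposition $v=Pv+(I-P)v$. Your version buys two things: it never invokes the spectrum or the max--min principle on $\mathbb{X}$ (the intermediate $\lambda_k$ disappears, so only the subspace min--max characterizations are needed), and your algebra is the sharp one --- triangle inequality plus Cauchy--Schwarz --- whereas the first inequality in the paper's chain (\ref{R lower}) replaces $\|Pv+(I-P)v\|_N^2$ by $\|Pv\|_N^2+\|v-Pv\|_N^2$, silently discarding the cross term $2N(Pv,(I-P)v)$, whose sign is not controlled; your estimate is exactly how that step should be repaired. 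The price of your route is the bookkeeping over degenerate projections, which the paper's choice of trial space sidesteps (there a vanishing $Pv$ or $\|Pv\|_N$ is harmless, since only the inequality $\lambda_k^\mathbb{V}\|Pv\|_N^2\le\|Pv\|_M^2$ is used).
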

\begin{proof}
From the argument of compactness mentioned above, both of the min-max and the max-min principle also hold for
$\lambda_k$:
\begin{equation}\label{min-max max-min}
\lambda_k=\min_{S_k}\max_{u\in S_k}R(u)=\max_{S,\dim{(S)}\leq k-1}\min_{u\in S^{\mathbb{X}\perp}}R(u),\,\,k=1,2,\cdots,
\end{equation}
where $S_k$ denotes any $k$-dimensional subspace of $\mathbb{X}\backslash\ker_{\mathbb{X}}(N)$, and
$S^{\mathbb{X}\perp}$ denotes the orthogonal complement of $S$ in $\mathbb{X}$ with respect to $M(\cdot,\cdot)$.

Due to the min-max principle, it is clear that
$\lambda_k^\mathbb{W}\geq\lambda_k$ as $\mathbb{W}\subset\mathbb{X}$.
Choosing a special $k-1$ dimensional subspace
$\mathbb{V}_{k-1}:=\text{span}\{u_1^\mathbb{V},u_2^\mathbb{V},\cdots,u_{k-1}^\mathbb{V}\}$, we could give a lower bound for
$\lambda_k$ from the max-min principle in (\ref{min-max max-min}) by
\begin{equation}\label{max-min lower}
\lambda_k^\mathbb{W}\geq\lambda_k\geq\min_{v\in \mathbb{V}_{k-1}^{\mathbb{X}\perp}}R(v).
\end{equation}
Let $\mathbb{V}_{k-1}^{\mathbb{V}\perp}$ denotes the orthogonal complement of $\mathbb{V}_{k-1}$ in $\mathbb{V}$
with respect to $M(\cdot,\cdot)$, i.e., $\mathbb{V}=\mathbb{V}_{k-1}\oplus\mathbb{V}_{k-1}^{\mathbb{V}\perp}$.
As a consequence, $\mathbb{X}$ can be decomposed by
\begin{equation}
\mathbb{X}
=\mathbb{V}\oplus\mathbb{V}^{\mathbb{X}\perp}
=\mathbb{V}_{k-1}\oplus\mathbb{V}_{k-1}^{\mathbb{V}\perp}\oplus\mathbb{V}^{\mathbb{X}\perp}.
\end{equation}
Then we have
$\mathbb{V}_{k-1}^{\mathbb{X}\perp}=\mathbb{V}_{k-1}^{\mathbb{V}\perp}\oplus\mathbb{V}^{\mathbb{X}\perp}$.

Notice that $\mathbb{V}_{k-1}^{\mathbb{X}\perp}\subset\mathbb{X}$. For any $v\in\mathbb{V}_{k-1}^{\mathbb{X}\perp}$,
we have
\begin{equation}
v=Pv+(I-P)v,\,\,\text{where}\,\,Pv\in\mathbb{V}_{k-1}^{\mathbb{V}\perp},\,\,(I-P)v\in\mathbb{V}^{\mathbb{X}\perp}.
\end{equation}
Then $\displaystyle{\|Pv\|_N^2\leq\frac{\|Pv\|_M^2}{\lambda_k^\mathbb{V}}}$ is held by
\begin{equation}
\lambda_k^\mathbb{V}=\min_{v\in \mathbb{V}_{k-1}^{\mathbb{V}\perp}}R(v)
=\min_{v\in \mathbb{V}_{k-1}^{\mathbb{V}\perp}}\frac{\|v\|_M^2}{\|v\|_N^2}
\leq\frac{\|Pv\|_M^2}{\|Pv\|_N^2}.
\end{equation}
Therefore, we have for any $v\in\mathbb{V}_{k-1}^{\mathbb{X}\perp}$,
\begin{equation}\label{R lower}
\begin{aligned}
R(v)&=\frac{\|v\|_M^2}{\|v\|_N^2}=\frac{\|v\|_M^2}{\|Pv+(I-P)v\|_N^2}
\geq\frac{\|v\|_M^2}{\|Pv\|_N^2+\|v-Pv\|_N^2}\\
&\geq\frac{\|v\|_M^2}{\displaystyle{\frac{1}{\lambda_k^\mathbb{V}}}\|Pv\|_M^2+\alpha^2\|v-Pv\|_M^2}
\geq\frac{\|v\|_M^2}{(\displaystyle{\frac{1}{\lambda_k^\mathbb{V}}}+\alpha^2)(\|Pv\|_M^2+\|v-Pv\|_M^2)}\\
&=\frac{\lambda_k^\mathbb{V}\|v\|_M^2}{(1+\alpha^2\lambda_k^\mathbb{V})(\|Pv\|_M^2+\|v-Pv\|_M^2)}
=\frac{\lambda_k^\mathbb{V}}{1+\alpha^2\lambda_k^\mathbb{V}}.
\end{aligned}
\end{equation}

The conclusion in (\ref{framework lower bounds}) is immediately obtained using (\ref{max-min lower}) and (\ref{R lower}).
\end{proof}

\section{Some Applications}

Based on Theorem \ref{framework theorem}, it is easy to give the lower-bound results for the eigenvalues 
which are computed by both the conforming and nonconforming  finite element methods if the constant 
$\alpha$ in (\ref{framework projection estimation}). 

Here, we suppose $\Omega$ be a domain in $\mathbb R^d$ and let $V_h^{\rm NC}$ denote some type of nonconforming finite element space such that $V_h^{NC}\not\subset V:=H_0^1(\Omega)$. If we take $\mathbb X:= H_0^1(\Omega)+ V_h^{\rm NC}$, $\mathbb V=V_h^{\rm NC}$, $\mathbb W=V$, 
$$M(u,v)=\int_{\Omega}\nabla u\nabla vd\Omega\ \ \ {\rm and}\ \ \  N(u,v)=\int_{\Omega}uvd\Omega.$$
the inequality (\ref{framework lower bounds}) is the result obtained in \cite{CarstensenGedicke,Liu}. For this setting, we 
can choose CR \cite{BrennerScott} and ECR \cite{LinXieLuoLiYang} elements to build the nonconforming finite element space $V_h^{\rm NC}$.  
We would like to say the similar derivatives can be extended to the Biharmonic,  
Stokes and Steklov eigenvalue problems \cite{CarstensenGallistl,HuHuangLin,LinXie_lowerbound,LinXie,Liu}.

When we choose $\mathbb V$ as some type of conforming finite element method such that $\mathbb V\subset \mathbb W$, 
we can also obtain the lower-bound result (\ref{framework lower bounds}) if we can have the upper bound of the 
constant $\alpha$ in (\ref{framework projection estimation}).

\end{document}